\def\Dj{\hbox{D\kern-.73em\raise.30ex\hbox{-}
\raise-.30ex\hbox{}}}
\def\dj{\hbox{d\kern-.33em\raise.80ex\hbox{-}
\raise-.80ex\hbox{\kern-.40em}}}
\newtheorem{question}{Question}[section]
\newtheorem{definition}{Definition}[section]
\newtheorem{theorem}{Theorem}[section]
\newtheorem{lemma}{Lemma}[section]
\newtheorem{corollary}{Corollary}[section]
\newtheorem{proposition}{Proposition}[section]
\newtheorem{remark}[theorem]{Remark}
\begin{document}

%

%
%

\title{The number of dissociation sets in  connected graphs}
\author{Bo-Jun Yuan\thanks{ Supported by National Natural Science Foundation of
China(12201559).}, Ni Yang, Hong-Yan Ge and Shi-Cai Gong\thanks{Corresponding author.}
\thanks{ Supported by National Natural Science Foundation of
China(12271484).}~\thanks{ E-mail addresses: ybjmath@163.com(B.J. Yuan),  yangni2024@163.com(N. Yang), and
scgong@zafu.edu.cn(S.C. Gong).}
\\{\small \it  School of Science, Zhejiang University of Science and Technology, }\\{\small \it
Hangzhou, 310023, P. R. China}}

\date{}
\maketitle

\baselineskip=0.20in

\noindent {\bf Abstract.}
Extremal problems related to the enumeration of graph substructures, such as independent sets, matchings, and induced matchings, have become a prominent area of research with the advancement of graph theory. A subset of vertices is called a dissociation set if it induces a subgraph with vertex degree  at most $1$, making it a natural generalization of these previously studied substructures.

In this paper, we present efficient tools to strictly increase the number of dissociation sets in a connected graph. Furthermore, we establish that the maximum number of dissociation sets among all connected graphs of order $n$ is given by
\begin{align*}
\begin{cases}
2^{n-1}+(n+3)\cdot 2^{\frac{n-5}{2}}, &~ {\rm if}~  n~{\rm is}~{\rm odd};\\
2^{n-1}+(n+6)\cdot 2^{\frac{n-6}{2}}, &~ {\rm if}~  n~{\rm is}~{\rm even}.
\end{cases}
\end{align*}
Additionally, we determine the achievable upper bound on the number of dissociation sets in a tree of order $n$ and characterize the corresponding extremal graphs as an intermediate result.
Finally, we identify the unicyclic graph that is the candidate for having the second largest number of dissociation sets among all connected graphs.
\vspace{3mm}

\noindent {\bf Keywords}:  dissociation set; extremal enumeration; tree; unicyclic graph.

 \smallskip
\noindent {\bf AMS subject classification 2010}: 05C31, 05C35, 15A18

\baselineskip=0.20in

\section{Introduction}
All graphs considered in this paper are   finite, simple and undirected.
We follow the terminology
in \cite{Bondy}. Let $G=(V,E)$ be a simple graph.
A subset of vertices in $G$ is called an {\it independent set} ({\it dissociation set})
if it induces a subgraph with vertex degree $0$ (at most $1$).
An independent set (dissociation set) of $G$ is {\it maximal}  or {\it maximum} if it is not a proper subset of any independent sets  (dissociation sets) or it has
maximum cardinality, respectively.
A matching $M\subseteq E$ is
called {\it induced} if the endpoints of $M$ induce a $1$-regular subgraph in $G$.
Similarly, an induced matching is called {\it maximal} or {\it maximum} if it is not a proper subset of any induced matchings or it has
maximum cardinality, respectively.
Clearly, a dissociation set encompasses both extreme cases: all vertices with degree $0$ and all vertices with degree $1$, serving as natural generalizations of independent sets and induced matchings.

Extremal problems concerning the enumeration of discrete substructures have a long history in  combinatorics and graph theory.
One main line is to study the enumeration of extreme graph substructures, such as maximum, maximal, minimum
and minimal.
For example, the earliest work can be traced back to 1960s, Erd\H{o}s and Moser first raised the problem of determining the maximum value of
the number of maximal independent sets in general graphs with fixed order and
those graphs achieving the maximum value.
Since then, a lot of work focuses on the maximum (and minimum) number of
maximum (and maximal) independent sets \cite{Griggs, Moon, Wilf}  (see a survey in \cite{Jou}), dissociation sets  \cite{Sun, Tu, Tu1, Zhang}, induced matchings  \cite{Basavaraju, 4, Gupta, Xiao, Yuan} for various classes of graphs.

A complementary main line involves the  enumeration of the total number of graph substructures.
Andrew Granville initially raised the problem of
determining the maximum value of independent sets in
 the family of $d$-regular graphs
of the same size in connection
to combinatorial number theory.
Later, the problem appeared first in print in a paper by Alon \cite{Alon}; for a survey see \cite{Zhaoyf}.
As a generalization, Wang and Tu \cite{Wangzy} et al. considered the maximum number of dissociate sets
in the family of  regular bipartite graphs. Zhou et al. \cite{Zhou} enumerated dissociation sets in grid graphs using the state matrix recursion algorithm.
And Chen and Liu \cite{Chenyan} characterized some types of graphs with minimum and maximum number of induced matchings.

Note that $2^n$  serves as a trivial upper bound for the number of dissociation sets in general graphs of order $n$, with the extremal graph achieving this bound being the disjoint union of $K_1$ and $K_2$.
The main purpose of our study is to confine the aforementioned problems to the domain of connected graphs.
In Section \ref{Preliminaries}, we  provide some efficient tools to make the number of dissociation sets of a connected  graph strictly increasing.
Utilizing these tools, we establish that the extremal enumeration problems for connected graphs can be effectively reduced to those of trees.
In Section \ref{tree}, we determine the maximum number of dissociation sets for trees of order $n$.
Subsequently,  we extend our results to connected graphs.
As an application, in Section \ref{unicyclic}  we characterize an achievable upper bound
on the  number of dissociation sets of a  unicyclic graph  with  the responding extremal graphs.

\section{Preliminaries\label{Preliminaries}}
Let $G$ denote a disconnected graph whose all components are complete graphs; and
for a complete graph  $K_r$ disjoint from $G$, let $K_r\ast G$ be a connected graph obtained by picking a vertex $u\in V(K_r)$ and connecting it to exactly one vertex of each component in $G$, see Figure \ref{fig1} for an example.

\begin{figure}[ht!]
\begin{center}
\begin{tikzpicture}[scale=0.9,style=thick]
\tikzstyle{every node}=[draw=none,fill=none]
\def\vr{3pt} 

\begin{scope}[yshift = 0cm, xshift = 0cm]
\path (0,0) coordinate (x1);
\path (-1,-1.5) coordinate (x2);
\path (-2.85,-1.5) coordinate (x3);
\path (2.65,-1.5) coordinate (x4);

\draw (x1) --(x2);
\draw (x1) --(x3);
\draw (x1) --(x4);
\draw(0,0.6) circle[radius=0.6];
\draw(-1,-2.1) circle[radius=0.6];
\draw(-3,-2.1) circle[radius=0.6];
\draw(2.8,-2.1) circle[radius=0.6];

\draw (x1)  [fill=black] circle ;
\draw (x2)  [fill=white] circle ;
\draw (x3)  [fill=white] circle ;
\draw (x4)  [fill=white] circle ;
\draw[above] (x1)++(0,0.3) node {$K_r$};
\draw[above] (x2)++(0,-0.9) node {$K_{s_2}$};
\draw[above] (x3)++(-0.15,-0.9) node {$K_{s_1}$};
\draw[above] (x4)++(0.15,-0.9) node {$K_{s_t}$};
\draw[above] (x2)++(1.95,-0.9) node {$\cdots$};

\end{scope}
\end{tikzpicture}
\end{center}
\caption{ $K_r\ast (K_{s_1}\cup K_{s_2}\cup \cdots \cup K_{s_t})$.}
\label{fig1}
\end{figure}
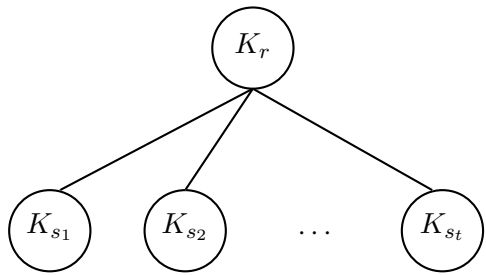

Let $G=(V(G),E(G))$ be a graph. A vertex of degree $1$ is called a   {\it pendant vertex} of $G$, and a  vertex of degree at least $2$ adjacent to a pendant vertex will be called  {\it quasi-pendant}.
Let $c(G)=|E(G)|-|V(G)|+1$ denote the dimension of the cycle space of a connected graph $G$.
For $S\subseteq V(G)$,  denote by $G[S]$ the subgraph of $G$  induced by $S$, and denote by $G-S$ the induced subgraph  $G[V(G)\setminus S]$.  In particular, if $S=\{v\},$ then we write $G-v$ instead of
$G-\{v\}$.
For a vertex $v \in V(G)$, the closed neighborhood and open neighborhood of $v$ are denoted by $N_G[v]$ and $N_G(v)$, respectively.
For a subset $S\subset  V(G)$, the closed neighborhood and open neighborhood of $S$ are defined as $N_G[S]=\cup_{v\in S}N_G[v]$ and $N_G(S)=N_G[S]\setminus S$, respectively.
Let $\mathcal{D}(G)$ be the set of dissociation sets of  $G$.

For a graph  $G$   with $v\in V(G)$, it is easy to see that $\mathcal{D}(G)$ can be partitioned into the following three parts:
\begin{equation*}
\begin{aligned}
D(G,\bar{v})&=\{~D~|~D\in \mathcal{D}(G)~ and~ v\notin  D\};\\
D(G, v^0)&=\{~D~|~D\in \mathcal{D}(G), v\in D~ and~ d_{G[D]}(v)=0\};\\
D(G, v^1)&=\{~D~|~D\in \mathcal{D}(G), v\in D~ and~ d_{G[D]}(v)=1\}.
\end{aligned}
\end{equation*}
Let $d(G,k)$ denote the number of dissociation sets consisting of $k$ vertices. Set $d(G,0)=1$
for all graphs. The total number of dissociation sets of $G$, denote by
$d(G)$, is defined as
$$d(G)=\sum_{k\geq 0}d(G,k).$$
Let $G_1,G_2,\ldots,G_s$  be all the components of $G$. It is easy to see that $d(G)=\prod_{i=1}^{i=s}d(G_i).$

Based on the definition of the total number of dissociation sets in a graph $G$,
 we can directly deduce both the trivial upper and lower bounds for $d(G)$ without the need for additional tools.
\begin{theorem} \label{y4}
Let $G$ be a graph of order $n$. Then $\frac{n^2+n+2}{2}\leq d(G)\leq 2^n$, the left equality holds if and only if $G\cong K_{n_1, n_2, \ldots, n_k}$ where $n_i\leq 2$ for any $1\leq i\leq k$ and $n_1+n_2+\cdots n_k=n$, and the right equality holds if and only if $G\cong sK_{1}\cup tK_{2}$ where $s+2t=n.$
\end{theorem}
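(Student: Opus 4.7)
The plan is to bound $d(G)$ by stratifying dissociation sets according to their cardinality. Every subset of size $0$, $1$, or $2$ is automatically a dissociation set (the induced subgraph has at most one edge and thus maximum degree at most $1$), so
\begin{equation*}
d(G)\;\geq\; 1+n+\binom{n}{2}\;=\;\frac{n^{2}+n+2}{2},
\end{equation*}
while the trivial estimate $d(G)\leq 2^{|V(G)|}=2^{n}$ holds because $\mathcal{D}(G)\subseteq 2^{V(G)}$.

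For the upper equality case, $d(G)=2^{n}$ forces every subset of $V(G)$ to lie in $\mathcal{D}(G)$; in particular $V(G)$ itself must induce a graph of maximum degree at most $1$, which is exactly $G\cong sK_{1}\cup tK_{2}$. Conversely every induced subgraph of such a $G$ inherits maximum degree at most $1$, so every subset is a dissociation set.

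The more delicate direction is the lower equality case. Equality $d(G)=\frac{n^{2}+n+2}{2}$ holds iff $G$ contains no dissociation set of size $\geq 3$, i.e., every triple $\{u,v,w\}\subseteq V(G)$ induces at least two edges (so that $G[\{u,v,w\}]\in\{P_{3},K_{3}\}$). I will translate this into two structural constraints on $G$: (i) $\alpha(G)\leq 2$, ruling out induced $\overline{K_{3}}$; and (ii) for every edge $uv\in E(G)$ and every other vertex $w$, at least one of $uw,vw$ is an edge, ruling out induced $K_{2}\cup K_{1}$. The main step is then to pass to the complement $\overline{G}$: condition (i) says $\overline{G}$ is triangle-free, and condition (ii) translates to the statement that no two vertices of $\overline{G}$ share a common neighbour. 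Combining these, any vertex $v$ of $\overline{G}$ with two neighbours would supply a common neighbour for them, a contradiction; hence $\Delta(\overline{G})\leq 1$, so $\overline{G}\cong sK_{1}\cup tK_{2}$ for some $s,t$ with $s+2t=n$. Complementing back, $G$ is exactly the complete multipartite graph whose parts have sizes $1$ or $2$, i.e., $G\cong K_{n_{1},n_{2},\ldots,n_{k}}$ with each $n_{i}\leq 2$.

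The forward check that every such complete multipartite graph with parts of size at most $2$ indeed realises equality is a direct counting: in $K_{n_{1},\ldots,n_{k}}$ with all $n_{i}\leq 2$, any $3$-subset necessarily meets two parts in a way that yields either $P_{3}$ or $K_{3}$, so no triple is a dissociation set, and the total count collapses to $1+n+\binom{n}{2}$. The principal obstacle is the complement argument — showing that the two apparently local conditions (i)--(ii) together force $\Delta(\overline{G})\leq 1$ — but, as outlined, this follows in one line once the no-common-neighbour reformulation of (ii) is identified.
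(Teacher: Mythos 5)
Your proof is correct and follows essentially the same approach as the paper: bound $d(G)$ below by the $1+n+\binom{n}{2}$ subsets of size at most two, above by $2^n$, and reduce both equality cases to conditions on $3$-element subsets (using that dissociation sets are closed under taking subsets). In fact you supply more detail than the paper does for the lower equality case --- the paper simply asserts the equivalence with $G\cong K_{n_1,\ldots,n_k}$, $n_i\leq 2$, whereas your complement argument ($\overline{G}$ triangle-free with no two vertices sharing a neighbour, hence $\Delta(\overline{G})\leq 1$) makes that step explicit --- and your observation that $d(G)=2^n$ forces $V(G)$ itself to be a dissociation set is a slightly cleaner route to the upper equality case.
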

\begin{proof}
For any graph $G$ of order $n$, $d(G,0)=1$, $d(G,1)= {n\choose 1}$ and $d(G,2)= {n\choose 2}$. So $d(G)\geq d(G,0)+d(G,1)+d(G,2)=\frac{n^2+n+2}{2}$ and the equality holds iff $d(G,3)=0$ iff $G$ contains neither $3K_1$ nor $K_2\cup K_1$ as induced subgraphs iff $G\cong K_{n_1, n_2, \ldots, n_k}$ where $n_i\leq 2$.
Note that $d(G)=d(G,0)+d(G,1)+\cdots +d(G,n)\leq {n\choose 0}+{n\choose 1}+\cdots +{n\choose n}=2^n$.
The equality holds implying that $d(G,3)={n\choose 3}$. Then the  subgraph  induced by any three vertices of graph $G$ is isomorphic to one of $3K_1$ or $K_2\cup K_1$. The result follows.
\end{proof}
In the proof of Theorem  \ref{y4},  we use the fact that if $d(G,3)=0$, then $d(G,k)=0$ for any $k\geq 4$.

Note that the upper bound in Theorem  \ref{y4} is attained in a disconnected graph. Naturally, we could consider the same problem in the set of connected graphs of order $n$.
The upper bound in Theorem \ref{y4}  plays a crucial role in our discussion.
For convenience, set
\begin{equation}\label{(2.1)}
\begin{aligned}
 g(n):=2^{n}.
\end{aligned}
\end{equation}

\begin{proposition} \label{y1}
Let $G$ be a graph with $v \in V(G)$. Then 
$$d(G)=d(G-v)+d(G-N[v])+\sum_{u\in N(v)}d(G-N[u]\cup N[v]).$$
\end{proposition}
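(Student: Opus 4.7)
The plan is to prove this by directly using the three-part partition of $\mathcal{D}(G)$ that has already been introduced right before the proposition, namely
$$\mathcal{D}(G) = D(G,\bar v)\,\cup\, D(G,v^0)\,\cup\, D(G,v^1),$$
which is plainly a disjoint union by definition. So it suffices to establish that
$|D(G,\bar v)| = d(G-v)$, $|D(G,v^0)| = d(G-N[v])$, and $|D(G,v^1)| = \sum_{u\in N(v)} d\bigl(G - (N[u]\cup N[v])\bigr)$, then add the three counts.

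For $D(G,\bar v)$ the bijection with $\mathcal{D}(G-v)$ is immediate: a subset $D\subseteq V(G)\setminus\{v\}$ induces a subgraph of maximum degree $\le 1$ in $G$ if and only if it does so in $G-v$, since no edge incident to $v$ is involved. For $D(G,v^0)$ I would write $D = \{v\}\sqcup D'$ where, because $v$ is isolated in $G[D]$, the remaining vertices $D'$ lie in $V(G)\setminus N[v]$; conversely $\{v\}\cup D'$ is a dissociation set of $G$ whenever $D'$ is a dissociation set of $G-N[v]$, giving a bijection with $\mathcal{D}(G-N[v])$.

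The main step, and the one that needs the most care, is $D(G,v^1)$. Here I would observe that if $v\in D$ with $d_{G[D]}(v)=1$, then $v$ has a unique neighbor $u\in N(v)\cap D$. Since $D$ is a dissociation set, we have $d_{G[D]}(u)\le 1$, and $v$ already contributes one edge incident to $u$, so $u$ has no other neighbor in $D$ either. Thus $D\setminus\{u,v\}$ is disjoint from $N(u)\cup N(v)$, i.e.\ from $N[u]\cup N[v]$, and it is a dissociation set of $G-(N[u]\cup N[v])$. Conversely, given any $u\in N(v)$ and any dissociation set $D'$ of $G-(N[u]\cup N[v])$, the set $\{u,v\}\cup D'$ lies in $D(G,v^1)$ with that prescribed unique neighbor $u$. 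This sets up a bijection
$$D(G,v^1) \;\longleftrightarrow\; \bigsqcup_{u\in N(v)} \mathcal{D}\bigl(G-(N[u]\cup N[v])\bigr),$$
where the disjointness of the right-hand side comes from the fact that the unique neighbor $u$ of $v$ in $D$ is recovered from $D$. Summing the three cardinalities then yields the stated identity.

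The only subtle point, and thus the main (minor) obstacle, is verifying in the $v^1$ case that removing $N[u]\cup N[v]$ (rather than just $N(u)\cup N(v)$) is exactly the right set: vertices of $D\setminus\{u,v\}$ cannot be adjacent to $v$ (else $v$ would have degree $\ge 2$ in $G[D]$) nor to $u$ (same reason), which is precisely what excluding $N[u]\cup N[v]$ enforces, while the vertices $u$ and $v$ themselves are accounted for separately in the prefactor $\{u,v\}$. After this verification, the rest is just bookkeeping.
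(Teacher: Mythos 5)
Your proof is correct and follows exactly the same route as the paper: both use the partition of $\mathcal{D}(G)$ into $D(G,\bar v)$, $D(G,v^0)$, $D(G,v^1)$ and identify each part with the corresponding term; the paper simply asserts the identifications as obvious, while you supply the explicit bijections (in particular the careful check that in the $v^1$ case the unique partner $u$ is recoverable and that $N[u]\cup N[v]$ is the right set to delete).
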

\begin{proof}
Obviously,
\begin{equation*}
\begin{aligned}
d(G)& =|D(G,\bar{v})|+|D(G, v^0)|+|D(G, v^1)|\\
&=d(G-v)+d(G-N[v])+\sum_{u\in N(v)}d(G-N[u]\cup N[v]).
\end{aligned}
\end{equation*}
\end{proof}

From  Proposition  \ref{y1}, the number of dissociation sets of special graphs, such as a path $P_n$, a star $S_n$ and a cycle $C_n$, can be derived immediately.
\begin{proposition} \label{y2}
If $n\geq 3,$ then $d(P_n)=d(P_{n-1})+d(P_{n-2})+d(P_{n-3})$. $d(S_n)=n+2^{n-1}$.
If $n\geq 4,$ then $d(C_n)=d(P_{n-1})+d(P_{n-3})+2d(P_{n-4})$.
\end{proposition}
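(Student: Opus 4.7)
The plan is to derive each of the three identities by a single application of Proposition \ref{y1}, choosing the vertex $v$ so that the subgraphs $G-v$, $G-N[v]$, and $G-N[u]\cup N[v]$ all collapse to familiar graphs whose dissociation counts are already named.

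For the path recurrence, I would take $v$ to be a pendant vertex of $P_n$ and let $u$ be its unique neighbor. Then $P_n-v\cong P_{n-1}$, $P_n-N[v]\cong P_{n-2}$, and since $N[u]\cup N[v]$ is exactly the set of the three vertices at the pendant end of the path, $P_n-N[u]\cup N[v]\cong P_{n-3}$ (with the convention $P_0=\emptyset$ and $d(P_0)=1$, which handles $n=3$ correctly). Substituting into Proposition \ref{y1} produces the claimed three-term recurrence.

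For the star $S_n$, I would apply Proposition \ref{y1} at the center $c$: then $S_n-c\cong (n-1)K_1$, whose dissociation sets are all $2^{n-1}$ subsets of the leaves; $S_n-N[c]=\emptyset$, contributing $1$; and for each of the $n-1$ leaves $u\in N(c)$ we have $N[u]\cup N[c]=V(S_n)$, so each summand equals $d(\emptyset)=1$. Adding these gives $d(S_n)=2^{n-1}+1+(n-1)=n+2^{n-1}$.

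For the cycle, I would apply Proposition \ref{y1} at an arbitrary vertex $v$ of $C_n$ with the two neighbors $u_1,u_2$. Deleting $v$ breaks the cycle into $P_{n-1}$, and deleting the closed neighborhood $\{v,u_1,u_2\}$ leaves $P_{n-3}$. For each of $u_1$ and $u_2$, the union $N[u_i]\cup N[v]$ consists of four consecutive vertices on the cycle, so removing them leaves $P_{n-4}$ (interpreting $P_0=\emptyset$ for $n=4$). Summing over the two neighbors gives the $2d(P_{n-4})$ contribution. The only mildly delicate point is the boundary case $n=4$, where one must verify that the convention $d(P_0)=1$ remains consistent; no other obstacle arises, as the argument is a direct deletion-decomposition via Proposition \ref{y1}.
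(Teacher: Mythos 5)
Your proposal is correct and is exactly the computation the paper intends: the paper states this proposition with no written proof, remarking only that it follows immediately from Proposition \ref{y1}, and your choice of deletion vertex (a pendant vertex of $P_n$, the center of $S_n$, an arbitrary vertex of $C_n$) together with the convention $d(P_0)=1$ carries out that derivation correctly, including the boundary cases $n=3$ and $n=4$.
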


Intuitively, one would expect that the number of dissociation sets in a graph $G$
increases as the graph becomes more sparse. This hypothesis is substantiated by the following two results, which serve as the foundational tools for our Main Theorem.
\begin{proposition} \label{y2}
If $H$ is a proper induced  subgraph of a graph $G$, then $d(H)< d(G)$.
\end{proposition}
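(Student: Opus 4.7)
The proposition is an essentially structural statement: making a graph larger (while keeping the induced structure) can only create more dissociation sets. My plan is to exhibit a genuine injection from $\mathcal{D}(H)$ into $\mathcal{D}(G)$ and then produce at least one element of $\mathcal{D}(G)$ not in the image.

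First, I would observe that since $H$ is induced in $G$, for any subset $D\subseteq V(H)$ one has $H[D]=G[D]$. Therefore $D$ is a dissociation set of $H$ if and only if it is a dissociation set of $G$ that happens to be contained in $V(H)$. In particular, the inclusion $\mathcal{D}(H)\hookrightarrow \mathcal{D}(G)$ is valid, which already yields the weak inequality $d(H)\le d(G)$.

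For the strict inequality, I would use the standing assumption that $H$ is a \emph{proper} induced subgraph. Because $H$ is induced, $E(H)$ is determined by $V(H)$, so $V(H)=V(G)$ would force $H=G$; hence properness of $H$ means there exists some $v\in V(G)\setminus V(H)$. The singleton $\{v\}$ trivially induces an edgeless subgraph and is therefore a dissociation set of $G$, yet it cannot belong to $\mathcal{D}(H)$ since $v\notin V(H)$. This single element witnesses $d(H)<d(G)$.

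I do not anticipate any real obstacle here; the only subtlety worth being explicit about is the point that ``proper induced'' forces a missing vertex rather than just a missing edge, which is precisely what produces the extra dissociation set. If I wanted to be generous and quantify the gap, I could even note that each $D\in\mathcal{D}(H)$ yields a second dissociation set $D\cup\{v\}$ of $G$ whenever $v$ is non-adjacent to $D$ in $G$, giving a stronger lower bound; but for the statement as written, the one element $\{v\}$ suffices.
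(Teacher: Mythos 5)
Your proof is correct and follows essentially the same route as the paper: the inclusion $\mathcal{D}(H)\subseteq\mathcal{D}(G)$ gives the weak inequality, and a vertex of $V(G)\setminus V(H)$ (guaranteed by properness of the induced subgraph) supplies a dissociation set of $G$ outside $\mathcal{D}(H)$, exactly as the paper does with a nonempty dissociation set of $G-H$. Your explicit remark that properness of an \emph{induced} subgraph forces a missing vertex is a worthwhile clarification, but the argument is otherwise identical.
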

\begin{proof}
Let $D\in \mathcal{D}(H)$ be any dissociation set of $H$,  then $D$ is also a dissociation set of
$G$. Thus $d(H)\leq d(G)$. Denote by $S=G-H\neq \emptyset$.  Note that any  dissociation set of $S$ is also a dissociation set of $G$ but not contained in $H$.  This implies that  $d(H)< d(G)$.
\end{proof}

\begin{lemma} \label{01}
Let $G$ be a graph and let $uv\in E(G)$. Then 
$$d(G)\leq d(G-uv),$$ 
with equality if and only if $N_{G}[u]=N_{G}[v].$
\end{lemma}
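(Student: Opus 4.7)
The plan is to show that $\mathcal{D}(G) \subseteq \mathcal{D}(G-uv)$ always, and then analyze precisely which dissociation sets of $G-uv$ fail to be dissociation sets of $G$. For the inequality, I would observe that deleting an edge can only decrease induced degrees, so any $D \in \mathcal{D}(G)$ automatically satisfies $d_{(G-uv)[D]}(x) \leq d_{G[D]}(x) \leq 1$ for every $x \in D$; hence $D \in \mathcal{D}(G-uv)$. This immediately yields $d(G) \leq d(G-uv)$.

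For the equality characterization, the key observation is that a set $D \in \mathcal{D}(G-uv) \setminus \mathcal{D}(G)$ must contain both $u$ and $v$ (since these are the only vertices whose induced degrees change when we put the edge $uv$ back). More precisely, given $u,v \in D$ and $D \in \mathcal{D}(G-uv)$, we have $d_{G[D]}(u) = d_{(G-uv)[D]}(u) + 1$ and similarly for $v$, so $D \notin \mathcal{D}(G)$ iff $u$ or $v$ has a neighbor in $D\setminus\{u,v\}$ (in $G$).

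For the \emph{sufficiency} direction, assume $N_G[u] = N_G[v]$ and suppose for contradiction that some $D \in \mathcal{D}(G-uv)$ contains $u,v$ together with a vertex $w \in D \setminus \{u,v\}$ adjacent to $u$ in $G$. Then $w \in N_G(u) \subset N_G[u] = N_G[v]$, and since $w \neq v$, also $w \in N_G(v)$. Thus $w$ is adjacent to both $u$ and $v$ in $G-uv$, forcing $d_{(G-uv)[D]}(w) \geq 2$, contradicting $D \in \mathcal{D}(G-uv)$. So no such $D$ exists and $\mathcal{D}(G-uv) = \mathcal{D}(G)$. For the \emph{necessity} direction, if $N_G[u] \neq N_G[v]$ then, without loss of generality, choose $w \in N_G[u] \setminus N_G[v]$; since $u,v \in N_G[u] \cap N_G[v]$, we have $w \notin \{u,v\}$, so $w \in N_G(u)$ and $w \notin N_G(v)$. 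The set $D = \{u,v,w\}$ then lies in $\mathcal{D}(G-uv)$ (in $G-uv$ the degrees are $1,0,1$) but not in $\mathcal{D}(G)$ (since $d_{G[D]}(u) = 2$), which witnesses the strict inequality $d(G) < d(G-uv)$.

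The main obstacle is just keeping the bookkeeping straight: one must be careful that a putative $D \in \mathcal{D}(G-uv) \setminus \mathcal{D}(G)$ could in principle fail the dissociation condition at $u$, at $v$, or at both, and the argument in the sufficiency direction really exploits that the offending ``second neighbor'' forces a degree-$2$ vertex elsewhere (at $w$) via the twin condition. Apart from this twist, the proof is essentially a clean set-inclusion argument.
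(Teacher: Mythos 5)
Your proposal is correct, and it takes a genuinely different route from the paper. The paper proves this lemma by applying its vertex-deletion recursion (Proposition \ref{y1}) at $u$ in both $G$ and $G'=G-uv$, partitioning $N_G(u)\cup N_G(v)$ into $W_1=N_G(u)\setminus N_G[v]$, $W_2=N_G(u)\cap N_G(v)$, $W_3=N_G(v)\setminus N_G[u]$ and the remainder $W_4$, and then comparing the two expansions term by term: the $W_1$-indexed terms strictly increase unless $W_1=\emptyset$, and the merged final term has slack unless $W_3=\emptyset$, giving equality iff $W_1=W_3=\emptyset$, i.e.\ $N_G[u]=N_G[v]$. You instead argue at the level of the sets themselves: $\mathcal{D}(G)\subseteq\mathcal{D}(G-uv)$ since edge deletion only lowers induced degrees, any $D$ in the difference must contain both $u$ and $v$ and have a $G$-neighbor of $u$ or $v$ in $D\setminus\{u,v\}$, the twin condition forces such a witness $w$ to have degree $2$ in $(G-uv)[D]$ (so the difference is empty when $N_G[u]=N_G[v]$), and a vertex $w\in N_G[u]\triangle N_G[v]$ yields the explicit triple $\{u,v,w\}$ certifying strictness otherwise. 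Your argument is more elementary and more transparent about \emph{which} dissociation sets are gained; the paper's computation is heavier but stays within the recursive framework it reuses throughout (e.g.\ in Theorems \ref{y7} and \ref{y10}), and its display (1) localizes the deficit to the terms indexed by $W_1$ and $W_3$. Both proofs are complete; your bookkeeping (in particular the observation that a set in $\mathcal{D}(G-uv)\setminus\mathcal{D}(G)$ can only fail the degree condition at $u$ or $v$, and the symmetric handling of the two cases) is accurate.
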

\begin{proof}
Let $W_1=N_{G}(u)\setminus N_{G}[v]$, $W_2=N_{G}(u)\cap N_{G}(v)$, $W_3=N_{G}(v)\setminus N_{G}[u]$  and $W_4=V(G)\setminus (N_{G}[u]\cup N_{G}[v])$. For convenience, let  $G'=G-uv.$
By Proposition \ref{y1},
\begin{equation*}
\begin{aligned}
d(G)& =d(G-u)+\sum_{w\in N_{G}(u)}d(G-N_{G}[u]\cup N_{G}[w])+d(G-N_{G}[u])\\
&=d(G-u)+\sum_{w\in W_1\cup W_2}d(G-N_{G}[u]\cup N_{G}[w])+d(G-N_{G}[u]\cup N_{G}[v])+d(G-N_{G}[u])
\end{aligned}
\end{equation*}
and
\begin{equation*}
\begin{aligned}
d(G')& =d(G'-u)+\sum_{w\in N_{G'}(u)}d(G'-N_{G'}[u]\cup N_{G'}[w])+d(G'-N_{G'}[u])\\
&=d(G'-u)+\sum_{w\in W_1\cup W_2}d(G'-N_{G'}[u]\cup N_{G'}[w])+d(G'-N_{G'}[u]).
\end{aligned}
\end{equation*}
Note that  $G-u\cong G'-u,$ thus  $d(G-u)=d(G'-u).$

We claim that  $$\sum_{w\in  W_1\cup W_2}d(G-N_{G}[u]\cup N_{G}[w])\leq \sum_{w\in  W_1\cup W_2}d(G'-N_{G'}[u]\cup N_{G'}[w]).$$
In graph $G$, for any neighbor $w\in W_1$ we have the corresponding $w\in W_1$ in $G'$,
then $d(G-N_{G}[u]\cup N_{G}[w])=d(G[W_3\cup W_4 \setminus N_{W_3\cup W_4}(w)])$
and  $d(G'-N_{G'}[u]\cup N_{G'}[w])=d(G'[\{v\}\cup W_3\cup W_4 \setminus N_{W_3\cup W_4}(w)])$.
By Proposition \ref{y2}, $d(G[W_3\cup W_4 \setminus N_{W_3\cup W_4}(w)])< d(G'[\{v\}\cup W_3\cup W_4 \setminus N_{W_3\cup W_4}(w)])$. It follows that
$\sum_{w\in W_1}d(G-N_{G}[u]\cup N_{G}[w])<  \sum_{w\in W_1}d(G'-N_{G'}[u]\cup N_{G'}[w]).$
For the similar reason, it is easy to verify that
$\sum_{w\in W_2}d(G-N_{G}[u]\cup N_{G}[w])=\sum_{w\in W_2}d(G'-N_{G'}[u]\cup N_{G'}[w]).$
The claim follows. In particular, the equality in the claim  holds iff $W_1= \emptyset$.

We now claim that $$d(G-N_{G}[u]\cup N_{G}[v])+d(G-N_{G}[u])\leq d(G'-N_{G'}[u]).$$
Observe that $d(G-N_{G}[u])=d(G[W_3\cup W_4])$, $d(G-N_{G}[u]\cup N_{G}[v])=d(G[W_4])$ and
$d(G'-N_{G'}[u])=d(G'[\{v\}\cup W_3\cup W_4])$.
For the sake of simplicity, let $H=G'[\{v\}\cup W_3\cup W_4]$.
The claim is equivalent to verify that $d(G[W_3\cup W_4])+d(G[W_4])\leq d(H).$
Applying Proposition \ref{y1} to the vertex $v$ of the  subgraph $H$, we have
\begin{equation} \label{boi1}
\begin{aligned}
d(H)& =d(H-v)+d(H-N_{H}[v])+\sum_{w\in W_3}d(H-N_{H}[v]\cup N_{H}[w])\\
&=d(G[W_3\cup W_4])+d(G[W_4])+\sum_{w\in W_3}d(H-N_{H}[v]\cup N_{H}[w])\\
&\geq d(G[W_3\cup W_4])+d(G[W_4]).
\end{aligned}
\end{equation}
The  claim follows. Especially, if $W_3\neq \emptyset$, then $\sum_{w\in W_3}d(H-N_{H}[v]\cup N_{H}[w])> 0.$
It implies that the equality in $(1)$  holds iff $W_3= \emptyset$.

Combining the three upper bounds yields
$d(G)\leq d(G-uv)$ with equality if and only if $W_1=W_3= \emptyset$ (i.e., $N[u]=N[v]$).
%
\end{proof}

\begin{remark}
In addressing the problem of determining the extremal values of $d(G)$ within the set of connected graphs of a fixed order, Lemma \ref{01} suggests that we may confine our considerations to the subset of trees.
\end{remark}

\begin{remark}
As a direct application of Lemma \ref{01}, Theorem \ref{y4} is readily derived.
\end{remark}

Two distinct vertices $u,v\in V(G)$ are called {\it true twins}  if $N[u]=N[v]$ (which shows that $uv\in E(G)$), and are  called {\it false twins} if $N(u)=N(v)$. It is easy to see that if $u,v$ are true twins in graph $G$, then $u,v$ are false twins in the subgraph $G-uv.$
\begin{corollary} \label{coro1}
Let $C_n$ be any cycle   of order $n\geq 4$ with $e\in E(C)$, then  $d(C)< d(C-e)=d(P_n).$
\end{corollary}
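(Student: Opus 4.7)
The proof is essentially an immediate application of Lemma~\ref{01}, so the plan is short. First I would observe that deleting any edge $e$ from $C_n$ produces the path $P_n$ (cycles minus an edge are paths), which establishes the equality $d(C-e)=d(P_n)$ in the statement.

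Next, I would invoke Lemma~\ref{01} directly on the edge $e=uv$ of $C_n$ to obtain $d(C_n)\leq d(C_n-uv)=d(P_n)$. To upgrade this to strict inequality, I need to check that the equality condition of Lemma~\ref{01}, namely $N_{C_n}[u]=N_{C_n}[v]$, fails whenever $n\geq 4$. Writing out the closed neighborhoods: if $u'$ denotes the neighbor of $u$ distinct from $v$, and $v'$ denotes the neighbor of $v$ distinct from $u$, then $N_{C_n}[u]=\{u,v,u'\}$ and $N_{C_n}[v]=\{u,v,v'\}$. For $n\geq 4$ the cycle has enough vertices that $u'\neq v'$ (they coincide only in the triangle $C_3$), so the closed neighborhoods differ and Lemma~\ref{01} gives $d(C_n)<d(C_n-uv)$.

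There is really no obstacle here; the only subtlety is the hypothesis $n\geq 4$, which is exactly what rules out the twin case $N[u]=N[v]$ responsible for equality in Lemma~\ref{01}. I would simply state this observation, combine it with the identification $C_n-e\cong P_n$, and conclude.
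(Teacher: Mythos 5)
Your proposal is correct and follows exactly the paper's route: the paper's one-line proof likewise applies Lemma~\ref{01} after noting that a cycle of order $n\geq 4$ contains no true twins, which is precisely your observation that $N_{C_n}[u]\neq N_{C_n}[v]$ since the second neighbors $u'$ and $v'$ are distinct. You simply spell out the twin condition more explicitly than the paper does.
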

\begin{proof}
Since there are no  true twins in any  cycle   $C_n$ of order $n\geq 4$, the result follows by Lemma  \ref{01}.
\end{proof}

%

Indeed, Lemma \ref{01} provides a transformation that strictly increases the $d(G)$-index. Subsequently, it is a natural progression to explore an alternative transformation that also results in a strict increase of the $d(G)$-index.
Let $u_q$ be a quasi-pendant vertex in graph $G$, connected to $s \geq 2$ pendant vertices, denoted by $v_1,v_2,\ldots,v_s$. Consider the subgraph $G[S]$ induced by the set $S=\{u_q\cup v_1\cup  \cdots\cup v_s\}$. We construct a new graph $G'$ by replacing $G[S]$ with the graph $K_1\ast (\lfloor \frac{s}{2} \rfloor K_{2}\cup (s\bmod 2)K_{1})$ (refer to Figure $2$ for illustration).
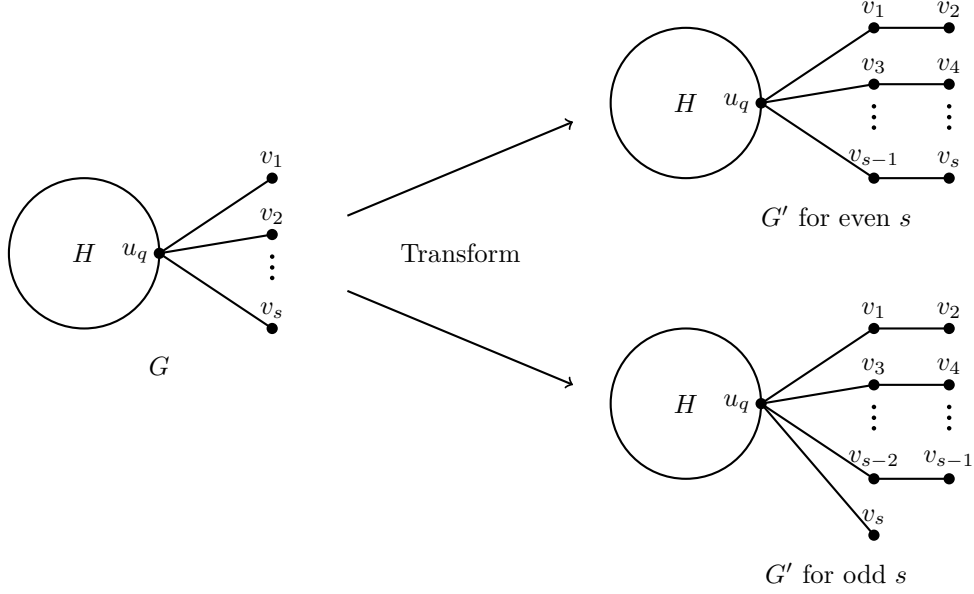
\begin{figure}[h]
		\centering
		\begin{tikzpicture}[node distance=1.5cm and 2cm, thick,
			every node/.style={font=\small},
			arrow/.style={-Latex, thick}]
			
			
			\coordinate [label=center: $H$] (A) at (-5,0);
			\coordinate [label=center: $G$] () at (-4,-1.5);
			\coordinate [label=left: $u_q$] (vp) at (-4,0);
			\coordinate [label=above: $v_1$] (v1) at (-2.5,1);
			\coordinate [label=above: $v_2$] (v2) at (-2.5,0.25);
			\coordinate [label=above: $v_s$] (vs) at (-2.5,-1);
			\coordinate [label=above: \huge{${\vdots}$}] () at (-2.5,-0.5);
			
			\node at (vp)[circle, fill, inner sep=1.5pt]{};
			\node at (v1)[circle, fill, inner sep=1.5pt]{};
			\node at (v2)[circle, fill, inner sep=1.5pt]{};
			\node at (vs)[circle, fill, inner sep=1.5pt]{};
			
			\draw (-5,0) circle (1);
			\draw (vp)--(v1);
			\draw (vp)--(v2);
			\draw (vp)--(vs);
			

			\coordinate [label=center: $H$] (B) at (3,2);
			\coordinate [label=center: $G'$ for even $s$] () at (5,0.5);
			\coordinate [label=left: $u_q$] (1vp) at (4,2);
			\coordinate [label=above: $v_1$] (1v1) at (5.5,3);
			\coordinate [label=above: $v_3$] (1v3) at (5.5,2.25);
			\coordinate [label=above: $v_{s-1}$] (1vs-1) at (5.5,1);
			\coordinate [label=above: \huge{${\vdots}$}] () at (5.5,1.5);
			\coordinate [label=above: \huge{${\vdots}$}] () at (6.5,1.5);
			\coordinate [label=above: $v_2$] (1v2) at (6.5,3);
			\coordinate [label=above: $v_4$] (1v4) at (6.5,2.25);
			\coordinate [label=above: $v_{s}$] (1vs) at (6.5,1);

			\node at (1vp)[circle, fill, inner sep=1.5pt]{};
			\node at (1v1)[circle, fill, inner sep=1.5pt]{};
			\node at (1v3)[circle, fill, inner sep=1.5pt]{};
			\node at (1vs-1)[circle, fill, inner sep=1.5pt]{};
			\node at (1v2)[circle, fill, inner sep=1.5pt]{};
			\node at (1v4)[circle, fill, inner sep=1.5pt]{};
			\node at (1vs)[circle, fill, inner sep=1.5pt]{};
			
			\draw (3,2) circle (1);
			\draw (1vp)--(1v1);
			\draw (1vp)--(1v3);
			\draw (1vp)--(1vs-1);
			\draw (1v1)--(1v2);
			\draw (1v3)--(1v4);
			\draw (1vs-1)--(1vs);
			
							
			\coordinate [label=center: $H$] (C) at (3,-2);
			\coordinate [label=center: $G'$ for odd $s$] () at (5,-4.25);
			\coordinate [label=left: $u_q$] (2vp) at (4,-2);
			\coordinate [label=above: $v_1$] (2v1) at (5.5,-1);
			\coordinate [label=above: $v_3$] (2v3) at (5.5,-1.75);
			\coordinate [label=above: $v_{s-2}$] (2vs-2) at (5.5,-3);
			\coordinate [label=above: $v_{s}$] (2vs) at (5.5,-3.75);
			\coordinate [label=above: \huge{${\vdots}$}] () at (5.5,-2.5);
			\coordinate [label=above: \huge{${\vdots}$}] () at (6.5,-2.5);
			\coordinate [label=above: $v_2$] (2v2) at (6.5,-1);
			\coordinate [label=above: $v_4$] (2v4) at (6.5,-1.75);
			\coordinate [label=above: $v_{s-1}$] (2vs-1) at (6.5,-3);

			\node at (2vp)[circle, fill, inner sep=1.5pt]{};
			\node at (2v1)[circle, fill, inner sep=1.5pt]{};
			\node at (2v3)[circle, fill, inner sep=1.5pt]{};
			\node at (2vs-2)[circle, fill, inner sep=1.5pt]{};
			\node at (2v2)[circle, fill, inner sep=1.5pt]{};
			\node at (2v4)[circle, fill, inner sep=1.5pt]{};
			\node at (2vs-1)[circle, fill, inner sep=1.5pt]{};
			\node at (2vs)[circle, fill, inner sep=1.5pt]{};
			
			\draw (3,-2) circle (1);
			\draw (2vp)--(2v1);
			\draw (2vp)--(2v3);
			\draw (2vp)--(2vs-2);
			\draw (2vp)--(2vs);
			\draw (2v1)--(2v2);
			\draw (2v3)--(2v4);
			\draw (2vs-2)--(2vs-1);
			
			\draw[-to] (-1.5,0.5)--(1.5,1.75);
			\draw[-to] (-1.5,-0.5)--(1.5,-1.75);
			\coordinate [label=center: Transform] () at (0,0);

		\end{tikzpicture}
		\caption{Graph $G$ and its transformation $G'$.}
	\end{figure}
\begin{lemma} \label{y6}
Let $G$ and $G'$  be defined as above. Then $d(G)< d(G').$
\end{lemma}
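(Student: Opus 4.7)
The plan is to apply Proposition \ref{y1} to the quasi-pendant vertex $u_q$ in both $G$ and $G'$, write each of the three resulting terms explicitly in terms of the common ambient subgraph $H := G[V(G)\setminus\{v_1,\dots,v_s\}]$, and then subtract. Write $s = 2k + \epsilon$ with $k=\lfloor s/2\rfloor$ and $\epsilon\in\{0,1\}$. Note that $H$ is identical in $G$ and $G'$ (the transformation only alters edges inside $S=\{u_q,v_1,\dots,v_s\}$), that $u_q\in V(H)$, and that $N_H(u_q)=N_G(u_q)\setminus\{v_1,\dots,v_s\}$ is the set of non-pendant neighbours of $u_q$.

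For $G$, the computation is routine: $G-u_q = (H-u_q)\sqcup sK_1$ yields $d(G-u_q)=2^s d(H-u_q)$; $G-N_G[u_q]=H-N_H[u_q]$ yields the second term; and in the summation each pendant $v_i$ contributes $d(H-N_H[u_q])$ (since $N_G[v_i]\subseteq N_G[u_q]$), while each $w\in N_H(u_q)$ contributes $d(H-(N_H[u_q]\cup N_H[w]))$. Hence
\[
d(G) = 2^s d(H-u_q) + (s+1)\,d(H-N_H[u_q]) + \sum_{w\in N_H(u_q)} d\bigl(H-N_H[u_q]\cup N_H[w]\bigr).
\]
For $G'$, the pendant side is a disjoint union of $k$ copies of $K_2$ and $\epsilon$ copies of $K_1$; using $d(K_2)=4$ and $d(K_1)=2$ one finds $d(G'-u_q)=4^k\cdot 2^\epsilon\cdot d(H-u_q)=2^s d(H-u_q)$ and $d(G'-N_{G'}[u_q])=2^k d(H-N_H[u_q])$ (the partners $v_{2i}$ are left isolated once $u_q$ and $v_{2i-1}$ are deleted). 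In the summation, a pair-midpoint $v_{2i-1}$ also removes its partner and contributes $2^{k-1}d(H-N_H[u_q])$; the lone pendant $v_s$ (when $\epsilon=1$) contributes $2^k d(H-N_H[u_q])$; and each $w\in N_H(u_q)$ contributes $2^k d(H-(N_H[u_q]\cup N_H[w]))$.

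Subtracting yields
\[
d(G')-d(G) = \bigl[\alpha(s)-(s+1)\bigr] d(H-N_H[u_q]) + (2^k-1)\sum_{w\in N_H(u_q)} d\bigl(H-N_H[u_q]\cup N_H[w]\bigr),
\]
where $\alpha(s)=2^{k-1}(k+2+2\epsilon)$ is the total coefficient of $d(H-N_H[u_q])$ arising from $G'$. The remaining work is to verify the two elementary inequalities $2^{k-1}(k+2)\ge 2k+1$ and $2^{k-1}(k+4)\ge 2k+2$, both strict for $s\ge 3$ (with equality at $s=2$ in the even case); together with $2^k-1\ge 1$ for $k\ge 1$ and $d(\,\cdot\,)\ge 1$ on every induced subgraph, this yields $d(G')>d(G)$ as long as either $s\ge 3$, or $s=2$ and $N_H(u_q)\ne\emptyset$ (the degenerate case $s=2$, $N_H(u_q)=\emptyset$ forces $G=G'=P_3$ and is outside the intended scope of the transformation). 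The main obstacle is the careful bookkeeping of the multiplicative factors $2^{k-1},2^k,4^k$ produced by the new $K_2$- and $K_1$-components in $G'$, and collecting all contributions to $d(H-N_H[u_q])$ into the single coefficient $\alpha(s)$ so that non-negativity of the difference becomes transparent.
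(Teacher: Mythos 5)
Your proof is correct, but it takes a genuinely different route from the paper's. The paper disposes of the lemma in three lines by pure edge surgery with Lemma \ref{01}: since $v_1,\dots,v_s$ are pairwise false twins, adding the matching $v_1v_2,v_3v_4,\dots$ preserves $d$ (equality case of Lemma \ref{01}, as each added edge joins true-twins-to-be), and then deleting the now-redundant edges $v_{2}u_q, v_4u_q,\dots$ strictly increases $d$ because $v_{2i}$ and $u_q$ are not true twins; the result of this surgery is exactly $G'$. You instead expand $d(G)$ and $d(G')$ via Proposition \ref{y1} at $u_q$ and compare coefficients against the common subgraph $H$. Your bookkeeping checks out: the first terms cancel ($2^s d(H-u_q)$ on both sides), the coefficient $\alpha(s)=2^{k-1}(k+2+2\epsilon)$ of $d(H-N_H[u_q])$ dominates $s+1$ strictly for $s\ge 3$, and the factor $2^k-1\ge 1$ handles the summation over $N_H(u_q)$. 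What the paper's argument buys is brevity and reuse of an already-proved tool; what yours buys is an explicit formula for $d(G')-d(G)$ and, notably, it surfaces the degenerate case $s=2$, $N_H(u_q)=\emptyset$ (i.e.\ $G\cong G'\cong P_3$), where the claimed strict inequality fails. The paper's proof silently assumes this away too — its assertion that ``$v_2,u_q$ are not true twins'' requires $u_q$ to have a neighbour outside $\{v_1,\dots,v_s\}$ when $s=2$ — and the case never arises in the applications (trees of order $\ge 7$), so your explicit flagging of it is a point in your favour rather than a gap.
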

\begin{proof}
Note that $v_1,v_2,\ldots,v_s$ are pairwise false twins in the graph $G$, then either $d(G)=d(G+v_1v_2+v_3v_4+\cdots +v_{s-1}v_{s})$ if $s$ is even or $d(G)=d(G+v_1v_2+v_3v_4+\cdots +v_{s-2}v_{s-1})$ if $s$ is odd by Lemma  \ref{01}.
When  $s$ is even,
since $v_2,u_q$ are not true twins in the graph $G+v_1v_2+v_3v_4+\cdots +v_{s-1}v_{s}:=H$,
$d(G)=d(H)<d(H-v_2u_q)$. Repeatedly applying Lemma  \ref{01},  we have that $d(G)< d(G').$
When $s$ is odd,  it can be readily verified that $d(G)< d(G')$ for analogous reasons.
\end{proof}

\section{The maximum number of dissociation sets of a tree\label{tree}}
Let $\mathcal{T}_n$  denote the set of all trees of order $n$. A tree
$T$ within this set is designated as a {\it a maximal tree} if its
$d(T)$-index achieves the maximum value among all trees in $\mathcal{T}_n$. We now proceed to characterize the structure of the extremal trees  with the most maximum dissociation
sets as follows.
 \begin{theorem} \label{y7}
Let $T$ be a   tree of order $n$. Then  $d(T) \leq f(n)$
with equality iff $T\cong F_n$, where
\begin{align*}
f(n):=
\begin{cases}
2^{n-1}+(n+3)\cdot 2^{\frac{n-5}{2}}, &~ {\rm if}~  n~{\rm is}~{\rm odd};\\
2^{n-1}+(n+6)\cdot 2^{\frac{n-6}{2}}, &~ {\rm if}~  n~{\rm is}~{\rm even}.
\end{cases}
\end{align*}
and
\begin{align*}
F_n:=
\begin{cases}
K_1\ast \frac{n-1}{2}K_2, &~ {\rm if}~  n~{\rm is}~{\rm odd}; \\
P_6~{\rm or}~ K_2\ast 2K_2, &~ {\rm if}~ n=6; \\
K_2\ast \frac{n-2}{2}K_2, &~ {\rm if}~ n~{\rm is}~{\rm even}~{\rm and}~ n\neq 6. \\
\end{cases}
\end{align*}
\end{theorem}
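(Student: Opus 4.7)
The plan is strong induction on $n$, with base cases $n\leq 5$ verified directly. A preliminary computation establishes $d(F_n)=f(n)$ by applying Proposition~\ref{y1} at the central vertex of $F_n$ and partitioning $\mathcal{D}(F_n)$ according to whether the centre is absent from $D$, present and isolated in $G[D]$, or present and paired with exactly one neighbour; the three resulting summands add to the claimed formula in each parity. This same calculation also yields the clean identity $f(n)=f(n-1)+f(n-2)+2^{n-3}$ for odd $n$, which will drive the inductive comparison.

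For the inductive step, let $T$ be an extremal tree of order $n\geq 6$. By iteratively applying Lemma~\ref{y6}, I may assume $T$ is \emph{reduced}: every quasi-pendant of $T$ has exactly one pendant neighbour, since otherwise Lemma~\ref{y6} would yield a tree with strictly larger $d$-value, contradicting extremality. Choose a longest path $v_0 v_1\cdots v_k$ in $T$. The longest-path property together with reducedness forces $\deg_T(v_1)=2$ and $N_T(v_1)=\{v_0,v_2\}$. Applying Proposition~\ref{y1} at the pendant $v_0$ yields
\[
d(T)=d(T-v_0)+d(T-\{v_0,v_1\})+d(T-\{v_0,v_1,v_2\}),
\]
where the first two summands are $d$-values of trees of orders $n-1$ and $n-2$, and the third is a forest that, by reducedness and the longest-path property, decomposes into a (possibly empty) subtree $T^{\ast}$ rooted at $v_3$ together with $K_1$- and $K_2$-branches hanging off $v_2$ (at most one $K_1$-branch). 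The inductive hypothesis then applies to each piece.

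The main obstacle is converting these inductive bounds into a sharp upper bound. For odd $n$, the identity $f(n)=f(n-1)+f(n-2)+2^{n-3}$ makes the inductive bound tight, and the equality case forces $T-v_0\cong F_{n-1}$ and $T-\{v_0,v_1\}\cong F_{n-2}$, which together pin down $T\cong F_n$. For even $n$, however, the naive sum $f(n-1)+f(n-2)+2^{n-3}$ strictly exceeds $f(n)$ by exactly $(n-4)\cdot 2^{(n-8)/2}$, so an additional argument is required. The decisive subcase is $d(T-v_0)=f(n-1)$: this forces $T-v_0\cong F_{n-1}$ with $v_1$ a deep pendant of $F_{n-1}$, so $T$ is the ``long-leg spider'' obtained from $F_{n-1}$ by attaching one further pendant, and a direct computation gives $d(T)=f(n-1)+f(n-2)+f(n-3)$; this equals $f(n)$ exactly at $n=6$ (where the long-leg spider coincides with $P_6$) and is strictly less than $f(n)$ for even $n\geq 8$. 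The remaining subcases, where $d(T-v_0)<f(n-1)$, require matching the deficit in the first term against the overshoot $(n-4)\cdot 2^{(n-8)/2}$; this bookkeeping, together with the parity split and the $n=6$ coincidence that produces both $F_6$ and $P_6$ as extremal, is the technical crux of the proof.
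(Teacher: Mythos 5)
Your skeleton matches the paper's: induction on $n$, reduction via Lemma \ref{y6} to trees in which every quasi-pendant has exactly one pendant neighbour, a longest path $v_0v_1\cdots$, and the three-term recursion from Proposition \ref{y1} applied at $v_0$. Your odd case is essentially the paper's argument verbatim (the identity $f(n)=f(n-1)+f(n-2)+2^{n-3}$ for odd $n$ together with the trivial bound $2^{n-3}$ on the third summand, equality forcing $T-v_0-v_1-v_2$ to be a union of $K_1$'s and $K_2$'s). Your overshoot computation for even $n$, and the evaluation $d(T)=f(n-1)+f(n-2)+f(n-3)$ for the long-leg spider, are both correct.

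The gap is in the even case. You propose to absorb the overshoot $(n-4)\cdot 2^{(n-8)/2}$ by casing on the first summand: either $d(T-v_0)=f(n-1)$ (handled) or $d(T-v_0)<f(n-1)$, where you say the deficit must be "matched against the overshoot" and defer this as bookkeeping. That step is not bookkeeping and, as formulated, cannot be completed: to offset a specific overshoot you need a quantitative lower bound on $f(n-1)-d(T-v_0)$, i.e.\ a gap between the maximum and the second-largest value of $d$ over trees of order $n-1$. No such stability estimate follows from the induction hypothesis, and indeed the paper leaves the second-largest problem open (Question 4.2). The paper closes the even case by attacking the \emph{third} summand instead: if the longest path has length at least $5$, then $T-v_0-v_1-v_2$ contains an induced subtree of order $s\geq 3$, so $d(T-v_0-v_1-v_2)\leq 2^{n-3-s}f(s)\leq 7\cdot 2^{n-6}$, which undercuts $2^{n-3}$ by more than the overshoot and yields $d(T)<f(n)$; if the longest path has length exactly $4$, reducedness forces $T\cong K_2\ast\frac{n-2}{2}K_2$ outright. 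Replacing your deferred subcase with this bound on the deleted forest is what actually completes the proof.
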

\begin{proof}
First, note that $d(F_n)=f(n)$ by Proposition \ref{y1}.
We shall prove the theorem by induction on $n$.
The result holds trivially if $n\leq 6$ by Proposition \ref{y1} and Lemma \ref{y6}.
Assume that it is true for all $n'<n$.
Let $T$ be a  maximal tree of order $n\geq 7$  and choose a longest path $v_0v_1v_2\cdots v_l$   of $T$ starting from $v_0$.
Then Lemma \ref{y6} implies that each
quasi-pendant vertex of a maximal tree  is adjacent to exactly one pendant vertex.
So $d_T(v_1)=2$ and $l\geq 4$ otherwise $n\leq 4.$ By Proposition \ref{y1},
\begin{equation}\label{(1.2)}
\begin{aligned}
 d(T) =d(T-v_0)+d(T-v_0-v_1)+d(T-v_0-v_1-v_2).
\end{aligned}
\end{equation}

If $n$ is odd, then Eq.~(\ref{(1.2)}) yields
$$d(T)\leq f(n-1)+f(n-2)+g(n-3)=f(n),$$
where the inequality follows from the facts that both $T-v_0$ and $T-v_0-v_1$ are connected and  Theorem \ref{y4}, respectively.
Moreover, the equality holding imply that
$T-v_0-v_1-v_2$ is the disjoint union of $K_1,K_2$ by Theorem \ref{y4}. Combining the condition that each  quasi-pendant vertex of the maximal tree $T$ is adjacent to exactly one pendant vertex, we have that  $T\cong K_1\ast \frac{n-1}{2}K_2$.

If $n$ is even, we claim that $l=4$ otherwise $T-v_0-v_1-v_2$ contains an induced subtree of order $3\leq s\leq n-3.$
Further,  $d(T-v_0-v_1-v_2)\leq g(n-3-s)\cdot f(s)$  by Theorem \ref{y4} and  the induction hypothesis.
Consequently, Eq.~(\ref{(1.2)}) yields
\begin{equation*}
\begin{aligned}
d(T)& \leq f(n-1)+f(n-2)+g(n-3-s)\cdot f(s)\\
&= 2^{n-2}+(n+2)\cdot 2^{\frac{n-6}{2}}+2^{n-3}+(n+4)\cdot 2^{\frac{n-8}{2}}+2^{n-4}+(s+3)\cdot 2^{\frac{2n-s-11}{2}}\\
&<f(n),
\end{aligned}
\end{equation*}
which  contradicts the maximality of $T.$ (Since $d(F_n) =f(n)$, $d(T) \geq f(n)$ as $T$ is a maximal tree.)
It follows that $l=4$ and further $d_T(v_1)=d_T(v_3)=2$ by Lemma \ref{y6}.
It is easy to verify that $T\cong K_2\ast \frac{n-2}{2}K_2$ and $d(T)=2^{n-1}+(n+6)\cdot 2^{\frac{n-6}{2}}.$
The result follows.
\end{proof}

Given that there are no false twins in the maximal trees $f(n)$ as established in Theorem \ref{y7}, we can derive the following extremal values among all connected graphs as an application of Theorem \ref{y7}.
\begin{theorem}\label{y8}
Let $G$ be a  connected graph of order $n$. Then  $d(G) \leq f(n)$
with equality iff $G\cong F_n$, where $f(n)$ and $F_n$ are defined in  Theorem \ref{y7}.
\end{theorem}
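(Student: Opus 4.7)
The plan is to reduce the extremal problem for connected graphs to the tree case already solved in Theorem \ref{y7}. If $G$ is itself a tree, the claim is immediate from Theorem \ref{y7}. Otherwise $c(G)\geq 1$: fix any spanning tree $T$ of $G$ and enumerate the $k=c(G)$ edges of $E(G)\setminus E(T)$ as $e_1,\ldots,e_k$. Setting $G_0=G$ and $G_i=G_{i-1}-e_i$, each $G_i$ contains $T$ and is therefore connected, and Lemma \ref{01} applied repeatedly yields
\[
d(G)=d(G_0)\leq d(G_1)\leq\cdots\leq d(G_k)=d(T)\leq f(n),
\]
where the final inequality is Theorem \ref{y7}. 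This establishes the upper bound $d(G)\leq f(n)$.

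For the equality case, suppose $d(G)=f(n)$. Then every inequality in the chain above is tight and $T\cong F_n$. In particular, the last step $G_{k-1}\to G_k=T$ is an equality in Lemma \ref{01}, so the endpoints $u,v$ of $e_k$ satisfy $N_{G_{k-1}}[u]=N_{G_{k-1}}[v]$. Since $e_k=uv\notin E(T)$, one has $N_{G_{k-1}}(u)=N_T(u)\cup\{v\}$, and an analogous identity holds for $v$; removing the common pair $\{u,v\}$ from both closed neighbourhoods reduces the condition to $N_T(u)=N_T(v)$. Thus $u$ and $v$ would be a pair of false twins in $T\cong F_n$.

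The remaining, and only genuinely structural, step is to verify that $F_n$ contains no pair of false twins. Using the three explicit descriptions in Theorem \ref{y7}, one checks directly that distinct pendant vertices have distinct unique neighbours; distinct quasi-pendants carry distinct pendants and are therefore distinguished by their open neighbourhoods; and the central vertex, or pair of central vertices when $n$ is even, is distinguished from the remaining vertices by its degree. Hence no two vertices of $F_n$ share the same open neighbourhood, which contradicts the conclusion of the previous paragraph. This forces $c(G)=0$, and combined with Theorem \ref{y7} proves that equality $d(G)=f(n)$ holds iff $G\cong F_n$. I expect no serious obstacle beyond this finite case check, since the reduction to trees is already prepared by Lemma \ref{01} and the hint preceding the theorem explicitly signposts the absence of false twins in $F_n$ as the decisive ingredient.
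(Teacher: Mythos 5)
Your proposal is correct and follows essentially the same route as the paper: delete the $c(G)$ non-tree edges one at a time, apply Lemma \ref{01} at each step, and rule out equality by observing that the final tight step would force a pair of false twins in $F_n$, which does not exist. The only difference is that you spell out the finite check that $F_n$ has no false twins, which the paper merely asserts.
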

\begin{proof}
Let $c(G)$ be the dimension of cycle spaces of $G$. If $c(G)=0,$ then the result follows by
Theorem \ref{y7}.
We claim that if $c(G)=d\geq 1,$ then $d(G)< f(n)$ and thus the result follows.
Since $G$ is  connected and $c(G)=d,$
we may choose the edges $u_1v_1,u_2v_2,\ldots,u_dv_d$ of  $G$
such that the subgraph $G-u_1v_1-u_2v_2-\cdots-u_dv_d:=T$ is a spanning tree of $G$. Applying Lemma \ref{01} and Theorem \ref{y7}, we have
\begin{equation*}
\begin{aligned}
d(G)& \leq d(G-u_1v_1)\leq \cdots \\
&\leq d(G-u_1v_1-u_2v_2-\cdots -u_{d-1}v_{d-1})\\
&\leq d(G-u_1v_1-u_2v_2-\cdots -u_{d-1}v_{d-1}-u_dv_d)=d(T)\\
&\leq f(n),
\end{aligned}
\end{equation*}
with equality iff all of  the above inequalities hold, where $d(T)= f(n)$ implies that $T\cong F_n$  by  Theorem \ref{y7}, and $d(G-u_1v_1-u_2v_2-\cdots -u_{d-1}v_{d-1})=d(T)$ implies that vertices $u_{d}$ and $v_{d}$ are true twins in the subgraph $G-u_1v_1-u_2v_2-\cdots -u_{d-1}v_{d-1}$ and further $u_{d}$ and $v_{d}$ are false  twins in $T$. However,  this contradicts the fact that there are no false  twins in $T\cong F_n$, where  $F_n$ is depicted in Theorem \ref{y7}. Then the result follows.
\end{proof}

\begin{question}\label{q1}
What is the number of dissociation sets in the connected graph of order $n$ with the second largest count?
\end{question}
In the proof of Theorem \ref{y8}, it is demonstrated that the extremal graph
$G$, which attains
the maximum $d(G)$ within the class of connected graphs, is isomorphic to the tree
$G\cong F_n$. Since there are no false twins in $F_n$, increasing a new edge between any two non-adjacent vertices in $F_n$ can make $d(F_n)$ strictly decrease.
Lemma \ref{01} establishes that the solution to Question \ref{q1} is realized within either unicyclic graphs or trees. 
We first address Question \ref{q1} in the context of unicyclic graphs, specifically by determining an upper bound on the maximum number of dissociation sets in a unicyclic graph. Subsequently, we propose a conjecture for the candidate graph with the second largest number of dissociation sets among the set of trees.

\section{The maximum number of dissociation sets of a unicyclic graph\label{unicyclic}}
In this section, we address the aforementioned extremal problems within the context of unicyclic graphs, aiming to establish bounds for the solution to Question \ref{q1}.
Let
\begin{equation}\label{eq}
\begin{aligned}
 h(n):=
\begin{cases}
2^{n-1}+(n+9)\cdot 2^{\frac{n-7}{2}}, &~ {\rm if}~  n~{\rm is}~{\rm odd};\\
2^{n-1}+(n+12)\cdot 2^{\frac{n-8}{2}}, &~ {\rm if}~  n~{\rm is}~{\rm even}~{\rm and}~ n\neq 6;\\
42, &~ {\rm if}~  n=6.
\end{cases}
\end{aligned}
\end{equation}
We start our investigation by excluding  some simple graph classes as candidates.
\begin{lemma} \label{u1}
Let $n$  be an integer  such that $n\geq 4.$ Then $d(C_n)<h(n).$
\end{lemma}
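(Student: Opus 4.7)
The plan is to reduce the claim to a numerical inequality on $h$ via a tribonacci-style recurrence for $d(C_n)$. First, Proposition~\ref{y2} yields the cycle formula $d(C_n)=d(P_{n-1})+d(P_{n-3})+2d(P_{n-4})$ for $n\ge 4$, while the same proposition gives the path recurrence $d(P_n)=d(P_{n-1})+d(P_{n-2})+d(P_{n-3})$ for $n\ge 3$. Writing out $d(C_n)-d(C_{n-1})-d(C_{n-2})-d(C_{n-3})$ via the cycle formula, then repeatedly rewriting $d(P_{n-1})$ and $d(P_{n-3})$ by the path recurrence, collapses the expression to $0$. Hence $d(C_n)$ inherits the tribonacci recurrence
\[
d(C_n)=d(C_{n-1})+d(C_{n-2})+d(C_{n-3}),\qquad n\ge 7.
\]

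Second, I would dispatch the base cases $n\in\{4,5,6\}$ by direct calculation from the cycle formula together with the values $d(P_0)=1,\,d(P_1)=2,\,d(P_2)=4,\,d(P_3)=7,\,d(P_4)=13,\,d(P_5)=24$, obtaining $d(C_4)=11<12=h(4)$, $d(C_5)=21<23=h(5)$, and $d(C_6)=39<42=h(6)$.

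Third, for $n\ge 7$ I proceed by strong induction. Assuming $d(C_k)<h(k)$ for $k\in\{n-3,n-2,n-1\}$, the recurrence immediately yields $d(C_n)<h(n-1)+h(n-2)+h(n-3)$, so it suffices to verify the auxiliary inequality
\[
h(n-1)+h(n-2)+h(n-3)\le h(n),\qquad n\ge 7.
\]
Since $h(m)\sim 2^{m-1}$ dominates its parity-dependent correction of order $m\cdot 2^{m/2}$, the left-hand side is asymptotic to $(1/2+1/4+1/8)\cdot 2^{n-1}=(7/8)\cdot 2^{n-1}$, comfortably below $h(n)$. Combining this with the three-term bound and the base cases closes the induction.

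The principal obstacle lies in making the auxiliary inequality rigorous rather than asymptotic, because $h$ is defined piecewise by parity and has the exceptional value $h(6)=42$. One must perform separate algebraic manipulations for $n$ odd and $n$ even and additionally handle the small residual cases $n=9$ and $n=10$ (where one of $h(n-1),h(n-2),h(n-3)$ sits at the exceptional value $h(6)$) by direct numerical verification; thereafter, reducing the desired inequality to an estimate of the form $2^{(n+3)/2}\ge n+9$ (respectively the analogous bound in the even case) yields the generic range $n\ge 7$ uniformly.
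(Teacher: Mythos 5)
Your proof is correct, and it takes a route that differs from the paper's in a meaningful way. The paper does not touch $d(C_n)$ directly for large $n$: it invokes Corollary~\ref{coro1} to get $d(C_n)<d(P_n)$, proves $d(P_n)<h(n)$ for $n\ge 9$ by induction on the path recurrence together with the inequality $h(n-1)+h(n-2)+h(n-3)<h(n)$, and then checks $4\le n\le 8$ by hand. You instead establish that $d(C_n)$ itself satisfies the tribonacci recurrence $d(C_n)=d(C_{n-1})+d(C_{n-2})+d(C_{n-3})$ for $n\ge 7$ (this does collapse to zero as you claim, using $d(P_{n-4})=d(P_{n-5})+d(P_{n-6})+d(P_{n-7})$, valid precisely for $n\ge 7$), and run the induction on cycles with base cases $n=4,5,6$. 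Both arguments hinge on the same auxiliary inequality for $h$, so the core idea is shared; what your version buys is that you never need the comparison with paths, and in particular you sidestep the fact that $d(P_7)=81>h(7)=80$ and $d(P_8)=149>h(8)=148$, which is exactly why the paper must extend its hand-checked range up to $n=8$. Two small corrections: the residual cases for the inequality $h(n-1)+h(n-2)+h(n-3)\le h(n)$ are $n\in\{7,8,9\}$, not $n\in\{9,10\}$, since $h(6)=42$ enters the left-hand side exactly when $6\in\{n-1,n-2,n-3\}$ (all three check out: $77\le 80$, $145\le 148$, $270\le 292$); and for even $n$ the generic reduction is to $2^{n/2}\ge n+4$ rather than the odd-case bound $2^{(n+3)/2}\ge n+9$, both of which hold throughout the required range.
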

\begin{proof}
We first claim that when $n\geq 9,$ $d(P_n)<h(n).$ Further since $d(C_n)<d(P_n)$ by Corollary \ref{coro1}, the result follows if $n\geq 9$.
We prove the claim by induction on $n.$ Since $d(P_n)=d(P_{n-1})+d(P_{n-2})+d(P_{n-3})$ by Proposition  \ref{y2}, it is easy to see that
$d(P_9)=274<h(9)=292$,   $d(P_{10})=504<h(10)=556,$ and $d(P_{11})=927<h(11)=1104.$
Suppose now that the claim holds in the case that $|P|\leq n-1$, where $n\geq 12$.
Next we divide into two cases to show that $d(P_n)<h(n).$
If  $n$ is odd, then
\begin{equation*}
\begin{aligned}
~~~~~~~~~~~~~~~~~~~~~~~~~~~~~~~~&h(n-1)+h(n-2)+h(n-3)\\
& =2^{n-2}+(n+11)\cdot 2^{\frac{n-9}{2}}+2^{n-3}+(n+7)\cdot 2^{\frac{n-9}{2}}+2^{n-4}+(n+9)\cdot 2^{\frac{n-11}{2}}\\
&= (\frac{1}{2}+\frac{1}{4}+\frac{1}{8})\cdot 2^{n-1}+\left(\frac{n+11}{2}+\frac{n+7}{2}+\frac{n+9}{4}\right)\cdot 2^{\frac{n-7}{2}}\\
&=2^{n-1}+(n+9)\cdot 2^{\frac{n-7}{2}}-\left(\frac{2^{n-1}}{8}-\frac{n+9}{4}\cdot 2^{\frac{n-7}{2}}\right)\\
&<h(n),
\end{aligned}
\end{equation*}
where the last inequality follows  from  the fact that $\frac{2^{n-1}}{8}-\frac{n+9}{4}\cdot 2^{\frac{n-7}{2}}>0$ when $n\geq 9$.
Then $h(n)>h(n-1)+h(n-2)+h(n-3)\geq d(P_{n-1})+d(P_{n-2})+d(P_{n-3})=d(P_n)$.
For the case that $n$ is even, we can get $d(P_n)<h(n)$  by the similar way. Consequently, the claim follows.

When $4\leq n\leq 8,$ it is easy to check that $d(C_n)<h(n).$ The result follows.
\end{proof}

\begin{definition}
Consider the cycle $C_n$ as $v_1v_2\ldots v_nv_1$. Denote by $C_n(k_1,\ldots,k_n)$ the unicyclic graph obtained from  $C_n$ by identifying $v_i\in V(C_n)$ with a   vertex of pendant tree $T_{k_i}$ for $1\leq i\leq n$, where $k_1+\cdots +k_n$ equals to the order of $C_n(k_1,\ldots,k_n)$.
\end{definition}

Let  $\mathcal{U}_n$  be the set of unicyclic graphs of order $n$. A unicyclic graph  $G$  is called  {\it maximal}
if $d(G)$ attains the maximum   value  among $G\in \mathcal{U}_n$.

\begin{theorem} \label{y10}
Let $G$ be a  unicyclic graph  of order $n\geq 3$. Then  $d(G) \leq h(n)$
with equality iff $G\cong U_n$, where $h(n)$ is defined in Eq.~(\ref{eq})
and
\begin{align*}
	U_n:=
	\begin{cases}
		K_3\ast \left(\lfloor \frac{n-3}{2} \rfloor K_{2}\cup (n+1\bmod 2)K_{1}\right), &~ {\rm if}~n\neq 6;\\
		K_1\ast (K_{3}\cup K_{2}), &~ {\rm if}~  n=6.
	\end{cases}
\end{align*}
\end{theorem}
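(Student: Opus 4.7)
The plan is to proceed by induction on $n$, after first computing $d(U_n)=h(n)$ directly. For odd $n\geq 9$, apply Proposition~\ref{y1} at the far leaf of an attached $K_2$ in $U_n$: the three resulting subgraphs are $U_{n-1}$, $U_{n-2}$, and $\frac{n-3}{2}K_2$, so a short algebraic check gives $d(U_n)=h(n-1)+h(n-2)+2^{n-3}=h(n)$. For even $n\geq 8$, I would instead pivot at the unique $K_1$-pendant attached to the hub $u$ of $U_n$, yielding $d(U_n)=d(U_{n-1})+2^{n-2}+2^{(n-4)/2}=h(n)$. The small orders $3\leq n\leq 7$---most delicately $n=6$, where $U_6\cong K_1\ast(K_3\cup K_2)$---are verified by direct enumeration.

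For the inductive step with $n\geq 8$, let $G$ be a maximal unicyclic graph of order $n$. Iterating Lemma~\ref{y6} (whose transformation only affects pendant structure and so preserves unicyclicity), I may assume each quasi-pendant of $G$ has exactly one pendant neighbor; by Lemma~\ref{u1} I may further assume $G\neq C_n$. Choose a longest path $v_0v_1\cdots v_l$ starting at a pendant $v_0$; the longest-path property together with the quasi-pendant condition force $v_1$ to have degree $2$ in $G$, with $N_G(v_1)=\{v_0,v_2\}$. Applying Proposition~\ref{y1} at $v_0$:
\[d(G)=d(G-v_0)+d(G-v_0-v_1)+d(G-N[v_1]).\]
Since $v_0$ is a pendant and $v_1$ becomes a pendant in $G-v_0$, both $G-v_0$ and $G-v_0-v_1$ are connected unicyclic graphs (of orders $n-1$ and $n-2$). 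By the inductive hypothesis $d(G-v_0)\leq h(n-1)$ and $d(G-v_0-v_1)\leq h(n-2)$, while $d(G-N[v_1])\leq 2^{n-3}$ by Theorem~\ref{y4}. For odd $n$ the three bounds sum exactly to $h(n)$, giving $d(G)\leq h(n)$.

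For even $n$, however, $h(n-1)+h(n-2)+2^{n-3}$ strictly exceeds $h(n)$, so a refinement is needed. My plan is to show that in a maximal $G$ of even order some vertex adjacent to the cycle must carry a direct $K_1$-pendant $v_0$; pivoting at this $v_0$ (so that $v_1$ is the hub on the cycle) mimics the $U_n$-recursion from the verification step and gives the tighter bound $d(G)\leq h(n-1)+2^{n-2}+2^{(n-4)/2}=h(n)$. Establishing the existence of such a hub-pendant---equivalently, ruling out a maximal $G$ of even order whose only pendants lie at the far ends of $K_2$-branches---is the central new ingredient.

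For the equality characterization, $d(G)=h(n)$ forces equality in every bound: by Theorem~\ref{y4} the subgraph $G-N[v_1]$ (or $G-N[u]$ in the even case) is a disjoint union of copies of $K_1$ and $K_2$; by induction $G-v_0\cong U_{n-1}$ and $G-v_0-v_1\cong U_{n-2}$; and combining these with the longest-path and one-pendant reductions recovers $G\cong U_n$. The main obstacle I foresee is the even-$n$ case, where the naive longest-path decomposition overshoots the target and one must extract a structural lemma guaranteeing the hub-pendant. A secondary obstacle is the $n=6$ anomaly, which forces $n=6,7$ (and plausibly $n=8$) to be treated as explicit base cases so that the inductive recursion begins only after the irregular regime.
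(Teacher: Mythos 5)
Your overall architecture (verify $d(U_n)=h(n)$, reduce via Lemma~\ref{y6} to one pendant per quasi-pendant, delete the end of a longest path and induct) matches the paper's, and your odd-case arithmetic $h(n-1)+h(n-2)+2^{n-3}=h(n)$ is correct. But there are two genuine gaps. First, the claim that the longest-path property forces $\deg_G(v_1)=2$ is false: if every pendant tree of $G$ is a star attached directly to the cycle (e.g.\ $C_{n-1}$ plus one pendant), then $v_1$ lies \emph{on} the cycle and has degree at least $3$, so $G-v_0-v_1$ is a forest rather than a unicyclic graph and the inductive bound $d(G-v_0-v_1)\leq h(n-2)$ is unavailable. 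This is not a removable technicality: for such graphs one must bound $d(G-v_0-v_1)$ by $f(n-2)$ via Theorem~\ref{y7}, and $f(n-2)>h(n-2)$, so the naive three-term sum overshoots $h(n)$ even for odd $n$. The paper isolates this as a separate case (its Case~2, where the longest pendant path has length $1$) and kills it with a different estimate, $d(U)\leq h(n-1)+f(n-2)+4f(n-6)<h(n)$; without some such argument your odd-case equality characterization is also incomplete, since you cannot conclude that the extremum is attained only in the ``$v_1$ off the cycle'' configuration.

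Second, the even case is not actually proved. You correctly observe that $h(n-1)+h(n-2)+2^{n-3}>h(n)$ and that a refinement is needed, but the two ingredients you defer are precisely the content of the proof: (i) the existence of a ``hub-pendant'' in a maximal even-order $G$ is asserted, not established, and (ii) even granting it, your bound $d(G-N[u])\leq 2^{(n-4)/2}$ presupposes $\deg(u)\geq\frac{n+2}{2}$, which holds in $U_n$ but not in an arbitrary candidate $G$; for small $\deg(u)$ the term can be as large as $2^{n-4}$, which destroys the inequality. The paper avoids this route entirely: for even $n$ it stays with the longest-pendant-path pivot, first shows the longest pendant path must have length exactly $2$ (otherwise the residual graph contains a unicyclic subgraph of order $s\geq 3$, forcing $d(U)\leq h(n-1)+h(n-2)+7\cdot 2^{n-6}<h(n)$), and then shows that any graph other than $K_3\ast(K_1\cup\frac{n-4}{2}K_2)$ leaves a subtree of order $s\geq 3$ in $U-u_2-u_1-v_i$, yielding the same strict deficit. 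You would need to either carry out that subtree analysis or genuinely supply the structural lemma and a degree bound for the hub; as written the even case (and with it half the theorem) is open.
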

\begin{proof}
First, note that $d(U_n)=h(n)$ by Proposition \ref{y1}.
Denote by $r$ the girth of $G$. If $r=n,$ i.e. $G\cong C_n$, then the result follows by Lemma \ref{u1}.
So it suffices to consider $r<n,$ i.e., there exists some pendant trees attaching some vertices of the cycle $C_r.$

The result is trivial for  $n\leq 6$.
We assume henceforth that $n\geq 7$ and proceed by induction on $n$.
Consider then a graph $C_r(k_1,\ldots,k_r):=U$ as in the statement of the theorem with  $d(U)$ maximal.
It follows from Lemma \ref{y6}  that each
quasi-pendant vertex of the maximal unicyclic graph $U$ is adjacent to exactly one pendant vertex.
Denote by $\zeta_i$ the length of  a longest path $v_iu_1u_2\ldots u_{\zeta_i-1}u_{\zeta_i}$  of the pendant tree $T_{k_i}$ starting from $v_i\in V(C_r)$ for $i=1,\ldots,r.$ Set $l_i=\max\{\zeta_i, 1\leq i\leq r\}\geq 1$ as $U\neq C_n$.

{\bf Case 1.} {\it $l_i\geq 2$.}
In this case, $v_iu_1u_2\ldots u_{l_i-1}u_{l_i}$ is the longest path of the pendant tree $T_{k_i}$.
Since $u_{l_i-1}$ is a quasi-pendant vertex of $U$,
$d_U(u_{l_i-1})=2$ and then $U-u_{l_i}-u_{l_i-1}$ is also a  unicyclic graph.
By Proposition \ref{y1},
\begin{equation}\label{(2.2)}
\begin{aligned}
 d(U) =d(U-u_{l_i})+d(U-u_{l_i}-u_{l_i-1})+d(U-u_{l_i}-u_{l_i-1}-u_{l_i-2}).
\end{aligned}
\end{equation}

If $n$ is odd, then by induction Eq.~(\ref{(2.2)}) yields
$$d(U)\leq h(n-1)+h(n-2)+g(n-3)=h(n).$$
Moreover, the equality holding imply that
$d(U-u_{l_i}-u_{l_i-1}-u_{l_i-2})=g(n-3)$ (if $l_i=2$, $u_{l_i-2}=v_i$), which shows that $U-u_{l_i}-u_{l_i-1}-u_{l_i-2}$
is the disjoint union of $K_1$ and $K_2.$
Combining the facts that $n$ is odd and each
quasi-pendant vertex of $U$  is adjacent to exactly one pendant vertex,
we get $U\cong K_3\ast \frac{n-3}{2}K_2.$
The result follows in this case.

If $n$ is even, we first claim that  $l_i\leq 2$. Otherwise, $U-u_{l_i}-u_{l_i-1}-u_{l_i-2}$ contains a
unicyclic subgraph of order $3\leq s\leq n-3$. And further  $d(U-u_{l_i}-u_{l_i-1}-u_{l_i-2})\leq 2^{n-3-s}\cdot h(s)\leq 2^{n-6}\cdot h(3)=7\cdot 2^{n-6}$.
Now when $n\geq 8$, Eq.~(\ref{(2.2)}) yields
\begin{equation}\label{(234)}
\begin{aligned}
d(U)& \leq h(n-1)+h(n-2)+ 7\cdot 2^{n-6}\\
&= 2^{n-2}+(n+8)\cdot 2^{\frac{n-8}{2}}+ 2^{n-3}+(n+10)\cdot 2^{\frac{n-10}{2}}+7\cdot 2^{n-6}\\
&< f(n)=2^{n-1}+(n+12)\cdot 2^{\frac{n-8}{2}},
\end{aligned}
\end{equation}
which  contradicts the maximality of $U.$ (Since $d(U_n) =h(n)$, $d(U) \geq h(n)$ as $U$ is   maximal.)
Now we have $l_i= 2,$ i.e. $v_iu_1u_2$ is the longest path of the pendant tree $T_{k_i}$ starting from $v_i\in V(C_r)$.
If $r=3$ and $U\cong K_3\ast (K_{1}\cup \frac{n-4}{2}K_2)$, then   $d(U) = h(n)$.
If $U\neq K_3\ast (K_{1}\cup \frac{n-4}{2}K_2)$, no matter what $r$ is, there exists a subtree of order
$3\leq s\leq n-3$ in the subgraph $U-u_{2}-u_{1}-v_i$. It follows from Theorem \ref{y7} that
$d(U-u_{2}-u_{1}-v_i)\leq 2^{n-3-s}\cdot f(s)\leq 2^{n-6}\cdot f(3)=7\cdot 2^{n-6}$, where $f(s)$ is shown in Theorem \ref{y7}. This situation can be excluded the same as Inequality~(\ref{(234)}).

{\bf Case 2.} {\it $l_i=1$.}
We will claim that the maximal unicyclic graph  $U$ does not exist in this case by proving that $d(U) <h(n).$
Recall that each
quasi-pendant vertex of $U$  is adjacent to exactly one pendant vertex, thus every vertex $v\in V(C_r)$ is
adjacent to at most  one pendant vertex and $|T_{k_j}|\leq 2$ for any $1\leq j\leq r$.
For $r\in \{3,4\}$, it is easy to check that $d(U) <h(n).$ Now consider that $r\geq 5$.
Denote by $v_iu_1$ the unique longest path of the pendant tree $T_{k_i}$.
Observe that $U-u_1-v_i$ is a subtree of order $n-2$, and thus $d(U-u_1-v_i)\leq f(n-2)$ by Theorem \ref{y7}.
Applying Proposition \ref{y1} to the vertex $u_1$, we have
\begin{equation*}\label{(23)}
\begin{aligned}
d(U)& \leq h(n-1)+f(n-2)+ 4\cdot f(n-6)\\
&\leq 2^{n-2}+(n+8)\cdot 2^{\frac{n-8}{2}}+ 2^{n-3}+(n+1)\cdot 2^{\frac{n-7}{2}}+4\cdot (2^{n-7}+(n-3)\cdot 2^{\frac{n-11}{2}})\\
&=2^{n-2}+ 2^{n-3}+(n+8)\cdot 2^{\frac{n-8}{2}}+2^{n-5}+(n-1)\cdot 2^{\frac{n-5}{2}}\\
&<  2^{n-1}+(n+12)\cdot 2^{\frac{n-8}{2}}\leq h(n),
\end{aligned}
\end{equation*}
where the last two inequalities follow from the facts that  $2^{n-5}+(n-1)\cdot 2^{\frac{n-5}{2}}<2^{n-3}$ if $n\geq 8,$ and in the expression of $h(n)$: $2^{n-1}+(n+9)\cdot 2^{\frac{n-7}{2}}>2^{n-1}+(n+12)\cdot 2^{\frac{n-8}{2}}$.
The result follows.
\end{proof}

Note that there is exactly one pair of true twins, $u$ and $v$, in the extremal graph $U_n$ of Theorem \ref{y10}. Then, by Lemma \ref{01}, we have $d(U_n) = d(U_n - uv)$, where $U_n - uv \cong K_1 \ast (2K_1 \cup \frac{n-3}{2}K_2)$ if $n$ is odd, and $U_n - uv \cong K_1 \ast (3K_1 \cup \frac{n-4}{2}K_2)$ if $n$ is even. We conclude this article with an open question.
\begin{question}\label{q1}
If $n \geq 10$, is $h(n)$, as defined in Eq.~(\ref{eq}), the second largest number of dissociation sets among all connected graphs of order $n$?
Does the extremal graph belong to $\{U_n, K_1\ast (2K_{1}\cup \frac{n-3}{2}K_{2})(with ~n~ odd), K_1\ast (3K_{1}\cup \frac{n-4}{2}K_{2})(with ~n~ even)\}$?
\end{question}

\end{document}